\documentclass[12pt,reqno]{amsart}

\usepackage{amssymb}
\usepackage[all]{xy}

\usepackage{hyperref}

\numberwithin{equation}{section}

\newtheorem{theorem}{Theorem}[section]

\newtheorem{corollary}[theorem]{Corollary}

\theoremstyle{definition}
\newtheorem{definition}[theorem]{Definition}

\begin{document}

\baselineskip=15pt

\title[Lie algebroid connections over rationally connected varieties]{Holomorphic
Lie algebroid connections over rationally connected varieties}

\author[I. Biswas]{Indranil Biswas}

\address{Department of Mathematics, Shiv Nadar University, NH91, Tehsil Dadri,
Greater Noida, Uttar Pradesh 201314, India}

\email{indranil.biswas@snu.edu.in, indranil29@gmail.com}

\author[A. Singh]{Anoop Singh}

\address{Department of Mathematical Sciences, Indian Institute of Technology (BHU), Varanasi 
221005, India}

\email{anoopsingh.mat@iitbhu.ac.in}

\subjclass{14H60, 53D17, 53B15, 32C38}

\keywords{Lie algebroid, connection, Atiyah bundle, rationally connected variety}

\date{}

\begin{abstract}
Take a holomorphic Lie algebroid $(V,\, \phi)$ over a rationally connected smooth complex
projective variety $X$. We show that --- under certain conditions --- a vector bundle $E$
over $X$ admits a $(V,\, \phi)$–connection if and only if $E$ is trivial. Moreover, we also
prove that under the same conditions, any $(V, \phi)$–connection over $X$ is flat.
\end{abstract}

\maketitle

\section{Introduction}

Let $X$ be a smooth projective variety over $\mathbb{C}$. Holomorphic 
connections on holomorphic vector bundles over $X$ were introduced by
Atiyah \cite{At2}. Holomorphic Lie algebroid connections (see Definition \ref{Def-3}) over $X$
provide a natural generalization of holomorphic connections, where
the tangent Lie algebroid $(TX, \, \mathrm{Id}_{TX})$ is replaced by
an arbitrary holomorphic Lie algebroid $(V,\, \phi)$ over $X$ (see Definition \ref{Def-1}).

It may be mentioned that by choosing the Lie algebroid appropriately, a wide range of other 
algebraic and differential geometric objects can be interpreted as Lie algebroid 
connections. For instance, Higgs bundles, \cite{Hi, Si1}, twisted Higgs bundles, 
\cite{Ni1,GGPN}, logarithmic connections, \cite{De,Ni2}, meromorphic connections, 
\cite{Bo,BS2}, and a broad subclass of Simpson's notion of $\Lambda$-modules, \cite{Si2, 
To2}, can all be understood as Lie algebroid connections for suitable choices of the Lie 
algebroid $(V,\, \phi)$.

As far as holomorphic connections are concerned, not every holomorphic vector bundle over a 
smooth complex projective variety $X$ admits a holomorphic connection. The existence of 
holomorphic connections is a highly restrictive condition. Moreover, even if a vector bundle 
over $X$ admits a holomorphic connection, it need not be flat. In contrast, if a holomorphic 
vector bundle over a compact Riemann surface admits a holomorphic connection, then the 
connection is necessarily flat. However, in general, a holomorphic vector bundle over a 
compact Riemann surface does not always admit a holomorphic connection. There is a criterion 
that determines when such a connection exists. To explain this criterion, for any holomorphic 
vector bundle $F$ on a compact connected Riemann surface $Y$, consider a holomorphic 
decomposition of $F$
$$
F \ =\ \bigoplus_{i=1}^\ell F_i
$$
into a direct sum of indecomposable holomorphic vector bundles. A theorem of Atiyah says that 
for any other holomorphic decomposition of $F$ into a direct sum of indecomposable 
holomorphic vector bundles, the isomorphism classes of the direct summands of it are simply a 
permutation of the isomorphism classes of $F_i$, $1\, \leq\, i\, \leq\, \ell$ \cite{At1}. The 
holomorphic vector bundle $F$ admits a holomorphic connection if and only if the degree of 
every $F_i$, $1\, \leq\, i\, \leq\, \ell$, is zero \cite{At2}, \cite{We}.

Further, if $X$ is a compact K\"ahler Calabi-Yau manifold, then any holomorphic vector bundle 
over $X$ admitting a holomorphic connection also admits a flat holomorphic connection 
\cite{BD}.

Here we consider holomorphic vector bundles over a rationally connected smooth complex 
projective variety and investigate Lie algebroid connections on them.
Recall that a complex projective variety $X$ is said to be 
{\it rationally connected} if any two points of $X$ can be joined by an irreducible rational 
curve on $X$; see \cite[Theorem 2.1]{KMM} for equivalent conditions.

Suppose that the anchor map $\phi\, :\, V\, \longrightarrow\, TX$ is surjective. Consider the following short exact sequence of holomorphic vector bundles
\begin{equation}\label{e.0}
0\, \longrightarrow\, S\,:=\, \text{Ker} (\phi) \, \longrightarrow\, V \, \stackrel{\phi}{\longrightarrow}\,
TX \, \longrightarrow\, 0.
\end{equation}
Then, we show the following (see Theorem \ref{thm:A}):

\begin{theorem}\label{thm:A.0}
Take a rationally connected smooth complex projective variety $X$
and a Lie algebroid $(V,\, \phi)$ over $X$ such that $\phi$ is surjective. Suppose that $S \,:=\,
\mathrm{Ker} (\phi)$ is strictly nef. A holomorphic vector bundle $E$ over $X$ admits a $(V,\,
\phi)$--connection if and only if $E$ is holomorphically trivial.
\end{theorem}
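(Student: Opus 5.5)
The trivial direction is immediate. The trivial bundle $\mathcal{O}_X^{\oplus r}$ carries the $(V,\phi)$--connection $D(f)\,=\,\phi^*(df)$, where $\phi^*\,:\,\Omega^1_X\,\longrightarrow\, V^*$ is the dual of the anchor. This satisfies the Leibniz rule $D(gs)\,=\,gD(s)+\phi^*(dg)\otimes s$.

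For the converse, the plan is to reduce to restrictions to rational curves and then use rational connectedness. First, for any $(V,\phi)$--connection $D$ on $E$, consider the composition
$$E\,\stackrel{D}{\longrightarrow}\, E\otimes V^*\,\longrightarrow\, E\otimes S^*.$$
For $s$ a local section of $E$ and $g$ a function, the defect $\phi^*(dg)\otimes s$ lies in $E\otimes \phi^*(\Omega^1_X)$. This is killed by restriction to $S\,=\,\mathrm{Ker}(\phi)$, so the composition is $\mathcal{O}_X$-linear. It therefore gives a holomorphic section $\theta\,\in\, H^0(X,\, \mathrm{End}(E)\otimes S^*)$. Second, I would exploit the strict nefness of $S$. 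On a rational curve $f\,:\,\mathbb{P}^1\,\longrightarrow\, X$, the bundle $f^*S$ is a direct sum of line bundles $\mathcal{O}(a_j)$ with all $a_j\,>\,0$. Hence $f^*S^*$ is a sum of $\mathcal{O}(-a_j)$, which is strictly negative. Meanwhile $\mathrm{End}(f^*E)$ is $\bigoplus \mathcal{O}(b_k-b_l)$ once $f^*E\,=\,\bigoplus\mathcal{O}(b_k)$.

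The key step is to show that $f^*E$ is trivial for every rational curve, or at least for a covering family of very free curves. Here is the idea. Splitting $V^*\to S^*$ is not global, but pulling back the exact sequence \eqref{e.0} to $\mathbb{P}^1$ gives $0\to f^*S\to f^*V\to f^*TX\to 0$. Then I would look at the subbundle $f^*V'\subset f^*V$, the preimage of $T\mathbb{P}^1\subset f^*TX$ when $f$ is an embedding; for general very free curves one may at least use the image of $df$. The connection $D$ restricted to $f^*V'$ gives a Lie-algebroid-type connection along $\mathbb{P}^1$. Consider the maximal-degree summand $\mathcal{O}(b_{\max})$ of $f^*E$, and the second fundamental form argument: the composite $\mathcal{O}(b_{\max})\to f^*E\otimes f^*V'^*\to (f^*E/\mathcal{O}(b_{\max}))\otimes f^*V'^*$. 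I would try to show that a direct summand of positive degree cannot carry such a connection. The reason is that the Atiyah class of $\mathcal{O}(b)$ pulled back via $\phi^*$ to $H^1(\mathbb{P}^1, f^*V'^*)$ must vanish, while $H^1$ of the $S^*$ part is forced by negativity to not absorb it. Concretely, the obstruction $b\cdot[\text{Atiyah class of }T\mathbb{P}^1\text{-extension}]$ maps injectively into $H^1(f^*V'^*)$ precisely because $H^0(f^*S^*)\,=\,0$. This comes from the long exact sequence of $0\to \Omega_{\mathbb{P}^1}\to f^*V'^*\to f^*S^*\to 0$, where the map $H^1(\Omega_{\mathbb{P}^1})\to H^1(f^*V'^*)$ is injective since $H^0(f^*S^*)\,=\,0$ by strict nefness. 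This forces all degrees to be $0$ via the Atiyah–Weil-type criterion on $\mathbb{P}^1$ applied to each indecomposable summand. I expect this step, making the restricted connection and the injectivity argument precise, to be the main obstacle.

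Finally, once $f^*E$ is trivial for all rational curves through general points, a standard result for rationally connected varieties (a bundle trivial on all rational curves is trivial, e.g.\ via the result in \cite{KMM}-style arguments, or by showing $E$ is semistable with vanishing Chern classes and $X$ is simply connected) gives that $E$ is holomorphically trivial.
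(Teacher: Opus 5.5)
Your proof is correct. Its global skeleton matches the paper's:
\begin{itemize}
\item pass to $V_f\,=\,T\mathbb{P}^1\times_{f^*TX} f^*V$ on $\mathbb{P}^1$ (your preimage of $df(T\mathbb{P}^1)$ under $f^*\phi$, taken as a subsheaf of $f^*V$, is isomorphic to $V_f$ via $p$);
\item show $f^*E$ is trivial using only $H^0(\mathbb{P}^1,\, f^*S^*)\,=\,0$;
\item conclude with the triviality criterion for rationally connected varieties.
\end{itemize}

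The key step on $\mathbb{P}^1$ is done differently. The paper reads the vanishing, via Serre duality, as $H^1(\mathbb{P}^1,\, K_{\mathbb{P}^1}\otimes f^*S)\,=\,0$, so $0\to f^*S\to V_f\to T\mathbb{P}^1\to 0$ splits. Composing $\widehat{\nabla}$ with $\gamma^*$ for a splitting $\gamma$ then gives an honest holomorphic connection on $f^*E$, which forces triviality because $\mathbb{P}^1$ is simply connected.

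You read the same vanishing as injectivity of $\widehat{\phi}^*\,:\,H^1(\mathbb{P}^1,\,\Omega^1_{\mathbb{P}^1})\to H^1(\mathbb{P}^1,\,V_f^*)$. Since each summand $\mathcal{O}(b)$ of $f^*E$ has vanishing $V_f$-Atiyah class $\widehat{\phi}^*(\mathrm{at}(\mathcal{O}(b)))$, you get $b\,=\,0$. The two arguments are Serre-dual forms of one statement. The connecting map $H^0(f^*S^*)\to H^1(\Omega^1_{\mathbb{P}^1})$ is pairing with the extension class of $V_f$, so your injectivity is equivalent to the paper's splitting. The paper's route ends with a classical fact: a bundle on $\mathbb{P}^1$ with a holomorphic connection is trivial. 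Yours works directly with the obstruction class and Grothendieck's splitting, needs no choice of splitting, and makes plain that only the anchor of $V_f$ enters, not its bracket.

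Some tidying is needed.
\begin{itemize}
\item The maximal-degree summand and the second fundamental form are superfluous. If $f^*E\,=\,L\oplus W'$, then $({\rm pr}_L\otimes{\rm Id})\circ\widehat{\nabla}\circ\iota_L$ is already a $V_f$-connection on $L$, which is exactly what your Atiyah-class argument needs.
\item The section $\theta$ is never used.
\item To define $\widehat{\nabla}$ you need $p^*\circ f^*\phi^*\,=\,\widehat{\phi}^*\circ(df)^*$, i.e.\ the commutativity of the fiber product square. The same identity is needed if you instead pull back the vanishing class $\phi^*(\mathrm{at}(E))$ and use functoriality of Atiyah classes.
\item Do not restrict to embeddings or to general very free curves. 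The fiber product works for every nonconstant $f$, and you need every $f$, because the criterion \cite[Proposition 1.2]{BS1} concerns all rational curves. Triviality on general very free curves alone is not enough: a stable rank two bundle on $\mathbb{P}^2$ with $c_1\,=\,0$, $c_2\,\geq\,2$ is trivial on a general line by Grauert--M\"ulich, yet is not trivial.
\end{itemize}
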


Under the assumptions of Theorem \ref{thm:A.0}, we prove the following (see Theorem \ref{thm:B}):

\begin{theorem}\label{thm:0.B}
Let $X$ be as in Theorem \ref{thm:A.0}, and let $(V, \phi)$ a Lie algebroid over $X$ such that
$\phi$ is surjective. Suppose that $S \,:=\, \text{Ker} (\phi)$ is strictly nef. Then, any
$(V,\, \phi)$-connection on a holomorphic vector bundle over $X$ is flat. (The vector
bundle is holomorphically trivial by Theorem \ref{thm:A.0}.)
\end{theorem}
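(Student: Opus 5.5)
By Theorem \ref{thm:A.0}, the bundle $E$ carrying the $(V,\,\phi)$--connection $D$ is holomorphically trivial, $E\,=\,\mathcal{O}_X^{\oplus r}$. The plan is to show that the curvature of $D$ is a holomorphic section of a bundle with no nonzero sections, so it must vanish.

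First I express $D$ in terms of the trivial connection. The trivial bundle carries the trivial $(V,\,\phi)$--connection $D^0(s)\,=\,\phi^*(ds)$: here $d$ is the de Rham differential on the components of $s$, and $\phi^*\,:\,\Omega^1_X\,\longrightarrow\, V^*$ is the dual of the anchor. This connection is flat, since $\phi$ is a morphism of Lie algebras on sections. Any other $(V,\,\phi)$--connection differs from it by an $\mathcal{O}_X$--linear term, so
$$
D\ =\ D^0+A,\qquad A\ \in\ H^0\bigl(X,\, V^*\otimes \mathrm{End}(E)\bigr)\ =\ H^0(X,\, V^*)\otimes \mathrm{M}_r(\mathbb{C}).
$$
The curvature is then
$$
\mathcal{R}(D)\ =\ d_V A+A\wedge A\ \in\ H^0\bigl(X,\, \textstyle\bigwedge^2 V^*\bigr)\otimes \mathrm{M}_r(\mathbb{C}),
$$
where $d_V$ denotes the Lie algebroid differential. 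So it suffices to prove the two vanishing statements
$$
H^0\bigl(X,\, V^*\bigr)\ =\ 0\qquad\text{and}\qquad H^0\bigl(X,\, \textstyle\bigwedge^2 V^*\bigr)\ =\ 0.
$$
The second already forces $\mathcal{R}(D)\,=\,0$. The first shows moreover that $A\,=\,0$, so $D\,=\,D^0$.

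To prove these vanishings, I dualize the exact sequence \eqref{e.0}:
$$
0\ \longrightarrow\ \Omega^1_X\ \longrightarrow\ V^*\ \longrightarrow\ S^*\ \longrightarrow\ 0.
$$
This sequence gives $\bigwedge^2 V^*$ a filtration with graded pieces $\Omega^2_X$, $\Omega^1_X\otimes S^*$ and $\bigwedge^2 S^*$. It then suffices to show that each graded piece has no nonzero sections.

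\begin{itemize}
\item For $X$ rationally connected we have $H^0(X,\,\Omega^p_X)\,=\,0$ for all $p\,\geq\, 1$, which handles $\Omega^1_X$ and $\Omega^2_X$.
\item For the pieces involving $S^*$, I restrict to a very free rational curve $f\,:\,\mathbb{P}^1\,\longrightarrow\, X$. Since such curves cover a dense subset of $X$, a section of any of these bundles vanishing on every such curve vanishes identically. On the curve, $f^*TX$ is ample, so $f^*\Omega^1_X\,=\,\bigoplus\mathcal{O}(-a_i)$ with all $a_i\,>\,0$.
\item Since $S$ is strictly nef, $f^*S$ is strictly nef on $\mathbb{P}^1$, i.e.\ $f^*S\,=\,\bigoplus\mathcal{O}(b_j)$ with all $b_j\,>\,0$.
\item Hence $f^*S^*$, $f^*\bigwedge^2 S^*$ and $f^*(\Omega^1_X\otimes S^*)$ are direct sums of line bundles of negative degree, and so have no nonzero sections.
\end{itemize}

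These vanishings give $H^0(X,\,V^*)\,=\,0$ and $H^0(X,\,\bigwedge^2 V^*)\,=\,0$, and hence $\mathcal{R}(D)\,=\,0$. The step needing the most care is writing the curvature of $D\,=\,D^0+A$ correctly as a section of $\bigwedge^2 V^*\otimes\mathrm{End}(E)$ and confirming that $D^0$ is flat. After that, the vanishing reduces to splitting the pullbacks on $\mathbb{P}^1$.
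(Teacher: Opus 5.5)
Your proof is correct and is essentially the paper's argument. Both come down to the absence of sections of $\bigwedge^2 V^*$ along very free rational curves, which follows from the positivity of $f^*TX$ and $f^*S$ (the paper packages this as ampleness of $f^*V$, you as a filtration of $\bigwedge^2 V^*$), combined with the density of such curves. The splitting $D=D^0+A$ is not needed for flatness, since once $E$ is trivial the curvature already lies in $H^0(X,\bigwedge^2 V^*)\otimes \mathrm{M}_r(\mathbb{C})$; your extra vanishing $H^0(X,V^*)=0$ does, however, give the bonus that $D$ equals the trivial connection $D^0$.
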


\section{Lie algebroids and connections}

Let $X$ be a smooth projective variety over $\mathbb{C}$. The holomorphic tangent and
cotangent bundles of $X$ will be denoted by $TX$ and $\Omega^1_X$ respectively.

\begin{definition}\label{Def-1}
A Lie algebroid on $X$ is a holomorphic vector bundle $V\, \longrightarrow\, X$,
together with an ${\mathcal{O}}_X$--linear homomorphism $$\phi\, :\, V\, \longrightarrow\, TX,$$ called the {\it anchor
map}, and a structure of a $\mathbb{C}$--Lie algebra on the sheaf of locally defined holomorphic sections of $V$
$$
[-, \,-] \ :\ V \otimes_{\mathbb{C}} V \ \longrightarrow\ V
$$
such that $$[s,\, f\cdot t]\,=\, f \cdot [s,\, t]+\phi(s)(f) \cdot t$$ for all locally defined
holomorphic sections $s,\, t$ of $V$ and all locally defined holomorphic functions $f$ on $X$.
\end{definition}

Note that the pair $(TX,\, {\rm Id}_{TX})$ is a Lie algebroid; the Lie algebra structure on
$TX$ is given by the Lie bracket operation of vector fields.

\begin{definition}
\label{Def-2}
A Lie algebroid $(V,\, \phi)$ on $X$ is called \textit{split} if there is a holomorphic homomorphism
$\eta\, :\, TX\, \longrightarrow\, V$ such that
\begin{equation}\label{eta}
\phi\circ\eta\ =\ \text{Id}_{TX}.
\end{equation}
A Lie algebroid $(V,\, \phi)$ on $X$ is called \textit{nonsplit} if it is not split.
\end{definition}

Note that in the above Definition \ref{Def-2}, the homomorphism $\eta : TX \longrightarrow V$ 
 is not required to be a morphism of sheaves of Lie algebras.

\begin{definition}
\label{Def-3} \cite{ELW, Ma}
Take a Lie algebroid $(V,\, \phi)$ on $X$.
Let $\phi^*\,:\, \Omega^1_X \, \longrightarrow\, V^*$ be the dual of $\phi$.
A \emph{Lie algebroid connection} on a holomorphic vector bundle $E$ on $X$ 
is a $\mathbb{C}$--linear holomorphic map 
$$
\nabla \ :\ E\ \longrightarrow\ E\otimes V^*
$$
such that $$\nabla (fs) \,=\, f \nabla (s) + s\otimes \phi^*(df), $$ where $s$, as before, is any locally
defined holomorphic section of $E$ and $f$ is any locally defined
holomorphic function on $X$. A Lie algebroid connection on $E$ will also be called a $(V,\, \phi)$--connection on $E$ or simply
a $\phi$--connection on $E$.
\end{definition}

When $(V,\, \phi)\,=\, (TX,\, {\rm Id}_{TX})$, a $(V,\, \phi)$--connection on $E$ is a 
holomorphic connection on $E$ in the usual sense \cite{At2}.

For any Lie algebroid $(V,\,\phi)$, and any $n\, \geq\, 0$, there is a $\mathbb C$--linear homomorphism
$$
d^n_V\ :\ \bigwedge\nolimits^n V^* \ \longrightarrow\ \bigwedge\nolimits^{n+1} V^*
$$
which is constructed as follows:
\begin{equation}\label{d0}
d_V^0(f)\ =\ \phi^*(df)
\end{equation}
for any locally defined holomorphic function $f$
on $X$. To construct $d^n_V$ for $n\, \geq\, 1$, take locally defined holomorphic sections $\omega \,\in\,
\bigwedge^n V^*$ and $v_1,\,\cdots,\, v_{n+1} \,\in\, V$; then
\begin{equation}\label{d10}
d^n_V(\omega)(v_1,\, \cdots,\,v_{n+1})\ =\ \sum_{i=1}^n (-1)^{i+1}\phi(v_i)(\omega(v_1,\,\cdots,\,
\widehat{v}_i,\, \cdots,\, v_{n+1}))
\end{equation}
$$
+\sum_{1\le i<j\le n+1}(-1)^{i+j} \omega\left ([v_i,\, v_j],\, v_1,\,\cdots,\,
\widehat{v}_i,\, \cdots,\,\widehat{v}_j,\, \cdots,\, v_{n+1}\right).
$$
It can be easily checked that $d^{n+1}_V \circ d^{n}_V = 0$ for every $n \geq 0$, and it follows that 
$$\left (\bigwedge\nolimits^\bullet V^*,\,d_V \right) \ =\
\bigoplus_{n\geq 0} \left(\bigwedge\nolimits^n V^*,\,d^n_V\right)$$
is a differential graded complex; it
is called the Chevalley-Eilenberg-de Rham complex for $(V,\,\phi)$ (see \cite{BMRT}, \cite{LSX},
\cite{BR} for details).
Note that when $(V,\,\phi)\,=\, (TX,\,{\rm Id}_{TX})$, then $(\bigwedge\nolimits^\bullet V^*,\,d_V)$
is the holomorphic de Rham complex of $X$.

Given a Lie algebroid connection $\nabla \,:\, E\,\longrightarrow\, E\otimes V^*$, consider the
following composition of operators
$$
E\,\,\stackrel{\nabla}{\longrightarrow}\,\, E\otimes V^* \,\,\xrightarrow{\,\,\,\nabla \wedge {\rm Id}_{V^*}+
{\rm Id}_E\otimes d^1_V\,\,\,} \,\, E\otimes \bigwedge\nolimits^2 V^*.
$$
It is straightforward to check that this operator $E\, \longrightarrow\, E\otimes \bigwedge\nolimits^2 V^*$
is ${\mathcal O}_X$--linear, and hence it is given by a holomorphic section
$$
{\mathcal K}(\nabla)\,\, \in\,\, H^0(X,\, \text{End}(E)\otimes \bigwedge\nolimits^2 V^*).
$$
This section ${\mathcal K}(\nabla)$ is called the \textit{curvature} of $\nabla$. The 
Lie algebroid connection $\nabla$ is called \textit{integrable} (or \textit{flat}) if
we have ${\mathcal K}(\nabla)\,=\, 0$.

Note that on a smooth projective variety $X$ over $\mathbb{C}$, a $(V, \,
\phi)$-connection, where $\mathrm{rank} (V) \geq 2$, need not be flat even when the Lie 
algebroid $(V,\, \phi)$ is split.

We establish a $V$-jet bundle exact sequence. Consider the first order jet bundle $J^1(E)$ on $X$ whose fiber 
 over any $x\, \in\, X$ is the space of
all holomorphic sections of $E$ over the first order infinitesimal neighborhood
of $x$. Then, $J^1(E)$ fits into the following short exact sequence 
\begin{equation}\label{e0}
0\, \longrightarrow\, E\otimes \Omega^1_X \, \stackrel{\alpha}{\longrightarrow}\, J^1(E)
\,\stackrel{\beta}{\longrightarrow}\, J^0(E)\,=\,E \, \longrightarrow\, 0,
\end{equation}
where the homomorphism $\beta$ in \eqref{e0} is the restriction map that sends
a section of $E$ over the first order infinitesimal neighborhood of $x$ to the
evaluation of the section at $x$. Consider the homomorphism 
\begin{equation}\label{ed}
\Delta\ :\ E\otimes \Omega^1_X\ \longrightarrow\ (E\otimes V^*)\oplus J^1(E)
\end{equation}
that sends any $e\otimes w\, \in\, (E\otimes \Omega^1_X)_x$, where $x\, \in\, X$, $e\, \in\, E_x$ and $w\,\in\,
(\Omega^1_X)_x$, to
$$
(-e\otimes\phi^*_x(w),\,\, \alpha_x(e\otimes w))\,\,\in\,\, (E\otimes V^*)_x\oplus J^1(E)_x,
$$
where $\phi^*$ is the dual of the anchor map $\phi$ and $\alpha$ is the homomorphism in \eqref{e0}. Note
that $\Delta$ in \eqref{ed} is fiberwise injective because $\alpha$ is so. Now define the quotient
\begin{equation}\label{e10}
J^1_V(E) \,\, :=\,\, ((E\otimes V^*)\oplus J^1(E))/\Delta(E\otimes \Omega^1_X).
\end{equation}
Consider the composition of maps
$$
(E\otimes V^*)\oplus J^1(E)\, \longrightarrow\, J^1(E) \, \stackrel{\beta}{\longrightarrow}\, E,
$$
where $\beta$ is the homomorphism in \eqref{e0} and the first map is the natural projection. This composition
of homomorphisms evidently vanishes on $\Delta(E\otimes \Omega^1_X)\, \subset\, (E\otimes V^*)\oplus J^1(E)$ (see
\eqref{ed} for $\Delta$) because $\beta\circ\alpha\,=\, 0$ (see \eqref{e0}), and hence it produces a
surjective homomorphism
\begin{equation}\label{eb}
\beta'\,\,:\,\, J^1_V(E)\,\,=\,\,((E\otimes V^*)\oplus J^1(E))/\Delta(E\otimes \Omega^1_X)
\, \,\longrightarrow\,\, E.
\end{equation}
Let
\begin{equation}\label{ea}
\alpha'\,\,:\,\, E\otimes V^*\,\, \longrightarrow\,\, J^1_V(E)
\end{equation}
be the composition of maps
$$
E\otimes V^*\, \hookrightarrow\, (E\otimes V^*)\oplus J^1(E)\, \longrightarrow\, ((E\otimes V^*)\oplus 
J^1(E))/\Delta(E\otimes \Omega^1_X)\,=\, J^1_V(E),
$$
where $(E\otimes V^*)\oplus J^1(E)\, \longrightarrow\, J^1_V(E)$ is the natural
quotient map (see \eqref{e10}). Consider the composition of maps
\begin{equation}\label{ea2}
\beta'\circ\alpha'\, \, :\,\, E\otimes V^*\,\, \longrightarrow\,\, E,
\end{equation}
where $\alpha'$ and $\beta'$ are constructed in \eqref{ea} and \eqref{eb} respectively.
It is straightforward to check that $\beta'\circ\alpha'$ vanishes identically.
Note that the image of $\alpha'$ in
\eqref{ea} is precisely the kernel of $\beta'$ in \eqref{eb}. Consequently, we have
a short exact sequence
\begin{equation}\label{e-1}
0\, \longrightarrow\, E\otimes V^* \, \stackrel{\alpha'}{\longrightarrow}\, 
J^1_V(E) \, \stackrel{\beta'}{\longrightarrow}\, E \, \longrightarrow\, 0
\end{equation}
of holomorphic vector bundles on $X$. 

An alternative description of $J^1_V(E)$ is as follows.  Consider the following direct sum of $\mathbb{C}$-modules:
\begin{equation}\label{eq:jet}
J^1_{V}(E) :=  E\oplus(E\otimes V^*).
 \end{equation}
We can equip $J^1_{V}(E)$ with two $\mathcal{O}_X$-module structures. The first one is coordinate-wise
multiplication $$ f \cdot (s,  \sigma)  :=  (f s,   f \sigma),$$
and the second one is given by
\begin{equation}
\label{eq:mult}
f \cdot (s,  \sigma) := (f s,  f \sigma + s \otimes d_{V}f),
\end{equation}
where $s$ (respectively, $\sigma$) is a locally defined holomorphic section of $E$
(respectively, $E\otimes V^*$) and $f$ is a locally defined holomorphic function on $X$. The $\mathbb{C}$-module
$J^1_{V}(E)$ equipped with the $\mathcal{O}_X$-module structure in \eqref{eq:mult} is called the {\it first order $V$-jet bundle}
associated to $E$. The $V$-jet bundle $J^1_V(E)$ naturally fits into the short exact sequence \eqref{e-1}.

A holomorphic $(V,\, \phi)$--connection on $E$ is a holomorphic homomorphism
$\nabla\,:\, E\, \longrightarrow\, J^1_V(E)$ such that $\beta'\circ\nabla\,=\, {\rm Id}_E$.
Therefore, $E$ admits a holomorphic $(V,\, \phi)$--connection if and only if the short exact
sequence in \eqref{e-1} splits holomorphically. See \cite{ABKS} for a criterion for the existence
of Lie algebroid connections on holomorphic vector bundles over $X$.

\section{A criterion for flat $(V, \phi)$-connection}\label{LC}

In this section, we assume that $X$ is a rationally connected smooth
projective variety over $\mathbb{C}$.
Recall that a complex projective variety $X$ is said to be {\it rationally connected} if any 
two points of $X$ can be joined by an irreducible rational curve on $X$; see \cite[Theorem 2.1]{KMM}.

\begin{theorem}\label{thm:A}
Let $X$ be a rationally connected smooth complex projective variety, and let $(V,\, \phi)$ 
be a Lie algebroid over $X$ such that $\phi$ is surjective. Suppose that $S \,:=\, \text{Ker} 
(\phi)$ is strictly nef. A holomorphic vector bundle $E$ over $X$ admits a $(V,\,
\phi)$--connection if and only if $E$ is holomorphically trivial.
\end{theorem}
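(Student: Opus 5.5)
The trivial direction is immediate. If $E\,=\,\mathcal{O}_X^{\oplus r}$, then the operator $d^0_V$ of \eqref{d0}, applied componentwise, sends $(f_1,\cdots,f_r)$ to $(\phi^*(df_1),\cdots,\phi^*(df_r))$. This satisfies the Leibniz rule of Definition \ref{Def-3}, so it is a $(V,\,\phi)$--connection on $E$.

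For the converse, take a $(V,\,\phi)$--connection $\nabla$ on $E$. The strategy is to restrict to rational curves and show that $E$ is trivial on every sufficiently free rational curve. Triviality on $X$ then follows from the standard fact for rationally connected varieties: a vector bundle whose pullback to every (very free) rational curve $f\,:\,\mathbb{P}^1\,\longrightarrow\, X$ is trivial is itself trivial. One can see this via the MRC fibration / Andreotti--Kollár type argument, or via $H^0$ of the bundle and its dual spanning.

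So fix a rational curve $f\,:\,\mathbb{P}^1\,\longrightarrow\, X$ and write $f^*E\,=\,\bigoplus_i \mathcal{O}(a_i)$. Dualizing \eqref{e.0} gives
$$0\,\longrightarrow\, \Omega^1_X\,\longrightarrow\, V^*\,\longrightarrow\, S^*\,\longrightarrow\, 0 .$$
Composing $\nabla$ with $E\otimes V^*\,\longrightarrow\, E\otimes S^*$ yields an $\mathcal{O}_X$--linear map $\theta\,:\,E\,\longrightarrow\, E\otimes S^*$, because $\phi^*(df)$ dies in $S^*$. Now pull back along $f$. The bundle $f^*S$ is strictly nef on $\mathbb{P}^1$, hence ample: $f^*S\,=\,\bigoplus\mathcal{O}(b_j)$ with all $b_j\,\ge\, 1$. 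So $f^*S^*$ has all degrees $\le -1$.

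Order the $a_i$ and let $E_{\max}\,=\,\bigoplus_{a_i=a_{\max}}\mathcal{O}(a_{\max})$. Then $\theta$ restricted to $E_{\max}$ and projected onto $E_{\max}\otimes f^*S^*$ vanishes, since it is a section of a negative bundle. More generally $\theta$ strictly lowers the degree filtration, so $\theta$ is nilpotent with respect to the Harder--Narasimhan filtration of $f^*E$.

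Next look at the top piece. The quotient $Q\,=\,f^*E\,/\,(\text{lower degree part})$ is the maximal-degree summand, and the connection should induce on the subbundle $\mathcal{O}(a_{\max})^{\oplus k}$ a structure that can be compared with a genuine holomorphic connection along the curve. Concretely, composing $f^*\nabla$ with $f^*V^*\,\longrightarrow\, \Omega^1_{\mathbb{P}^1}$ is not directly possible, since $V^*$ surjects onto $S^*$ rather than onto $\Omega^1$. Instead I would use the jet-sequence viewpoint. The splitting of \eqref{e-1} gives a splitting of the pushout of \eqref{e-1} along $E\otimes V^*\,\longrightarrow\, E\otimes S^*$, and the Atiyah class of $E$ maps to zero in $H^1(X,\,\mathrm{End}(E)\otimes V^*)$. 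Pulling back to $\mathbb{P}^1$ and mapping further to $H^1(\mathbb{P}^1,\,\mathrm{End}(f^*E)\otimes f^*V^*)$, I would use the long exact sequence of $0\,\to\,\Omega^1\,\to\, V^*\,\to\, S^*\,\to\, 0$. The Atiyah class $\mathrm{at}(E)\,\in\, H^1(\mathrm{End}E\otimes\Omega^1_X)$ is then the image of some class in $H^0(\mathrm{End}(E)\otimes S^*)$, namely $\theta$ up to sign. The key computation is on the degree-$a_{\max}$ block of $\mathrm{End}(f^*E)$, which is a trivial bundle $\mathrm{End}(\mathcal{O}^k)$. There $H^0$ of $\mathcal{O}^{k^2}\otimes f^*S^*$ vanishes, while the trace of the Atiyah class of that block, paired with the fundamental class of $\mathbb{P}^1$ (via $f^*\Omega^1_X\,\to\,\Omega^1_{\mathbb{P}^1}$), gives $k\,a_{\max}$ up to a nonzero constant. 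The block projection uses the HN filtration being compatible with $\theta$ (nilpotence above).

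This forces $a_{\max}\,=\,0$. Applying the same argument to the dual connection on $E^*$ gives $a_{\min}\,=\,0$, so $f^*E$ is trivial.

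The main obstacle is making this block-diagonal degree computation rigorous. The pieces to check are: that the Atiyah class restricted to the top graded piece equals the Atiyah class of that piece, that the off-diagonal contributions from $\theta$ cannot interfere there because $f^*S^*$ is negative, and that the conclusion is independent of any choice of splitting of $f^*E$. I would first try to handle this cleanly by working with the subbundle $F\,=\,\mathcal{O}(a_{\max})^{k}\,\subset\, f^*E$ and the induced $f^*V$--connection-type operator on $F$, modulo lower terms.
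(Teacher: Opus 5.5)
Your easy direction is fine. Your overall plan is also the paper's: show $f^*E$ is trivial for every nonconstant $f\,:\,\mathbb{P}^1\,\longrightarrow\, X$, then use the known criterion that a bundle trivial on all rational curves of a rationally connected variety is trivial (the paper cites Biswas--dos Santos for this). But the converse, as you wrote it, stops at the decisive step, and there is a gap there. The tools you reach for (the Harder--Narasimhan filtration, nilpotence of $\theta$, control of off-diagonal terms, the dual connection) are not what is needed.

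The missing observation is naturality. Since $f^*E\,=\,\bigoplus_i \mathcal{O}(a_i)$ holomorphically, the projection $\pi_{ii}\,:\,\mathrm{End}(f^*E)\,\longrightarrow\,\mathrm{End}(\mathcal{O}(a_i))\,=\,\mathcal{O}_{\mathbb{P}^1}$ is a morphism of bundles. It therefore commutes with the connecting homomorphisms of $0\to f^*\Omega^1_X\to f^*V^*\to f^*S^*\to 0$, tensored with $\mathrm{End}(f^*E)$ and with $\mathcal{O}_{\mathbb{P}^1}$ respectively. Also, the Atiyah class is functorial under pullback and additive on direct sums. So the argument runs as follows:
\begin{itemize}
\item Pull back $\mathrm{at}(E)\,=\,\pm\delta(\theta)$ and apply $\pi_{ii}$. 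This gives $\pi_{ii}f^*\mathrm{at}(E)\,=\,\pm\delta(\pi_{ii}f^*\theta)$.
\item Here $\pi_{ii}f^*\theta\,\in\, H^0(\mathbb{P}^1,\, f^*S^*)\,=\,0$.
\item Push forward by $(df)^*\,:\,H^1(\mathbb{P}^1,\,f^*\Omega^1_X)\,\longrightarrow\, H^1(\mathbb{P}^1,\,K_{\mathbb{P}^1})$. This yields $\mathrm{at}(\mathcal{O}(a_i))\,=\,0$, i.e.\ $a_i\,=\,0$, for every $i$ at once.
\end{itemize}
No maximal block, no filtration and no dual connection are needed, and your worries about interference and the choice of splitting do not arise.

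With that repair your route is a valid alternative, but the paper's is more direct.
\begin{itemize}
\item It forms the pulled-back Lie algebroid $V_f\,=\,T\mathbb{P}^1\times_{f^*TX} f^*V$, which sits in $0\to f^*S\to V_f\to T\mathbb{P}^1\to 0$.
\item This sequence splits because $H^1(\mathbb{P}^1,\,K_{\mathbb{P}^1}\otimes f^*S)\,\cong\, H^0(\mathbb{P}^1,\, f^*S^*)^*\,=\,0$.
\item Composing the induced $V_f$--connection on $f^*E$ with the dual splitting $V_f^*\to K_{\mathbb{P}^1}$ gives an honest holomorphic connection on $f^*E$. This forces $f^*E$ to be trivial.
\end{itemize}

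In your language, the map $f^*V^*\to \Omega^1_{\mathbb{P}^1}$ that you said is ``not directly possible'' does exist. Extending $(df)^*\,:\,f^*\Omega^1_X\to K_{\mathbb{P}^1}$ across $f^*V^*$ is obstructed only by $\mathrm{Ext}^1(f^*S^*,\,K_{\mathbb{P}^1})\,=\,H^1(\mathbb{P}^1,\,f^*S\otimes K_{\mathbb{P}^1})\,=\,0$. Equivalently, the pushed-out sequence $0\to K_{\mathbb{P}^1}\to V_f^*\to f^*S^*\to 0$ splits, so its connecting map vanishes and $\mathrm{at}(f^*E)\,=\,0$ outright.

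Both arguments rest on the same input, $H^0(\mathbb{P}^1,\,f^*S^*)\,=\,0$. The paper's gives the stronger conclusion that $f^*E$ carries a genuine holomorphic connection. The repaired version of yours shows that only the diagonal entries of $f^*\theta$ matter.
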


\begin{proof}
Suppose that $E$ admits a $(V,\, \phi)$--connection $\nabla$.
Consider a nonconstant morphism $ f \,:\, \mathbb{P}^1 \,\longrightarrow \,X$.
Then, we have the
differential $$df\,:\, T\mathbb{ P}^1\, \longrightarrow \, f^*TX.$$ The Lie algebroid
$(V,\, \phi)$ over $X$ gives a Lie algebroid over $\mathbb{ P}^1 $ as follows: Consider
the following commutative diagram
\begin{equation}\label{eq:cd1}
\begin{gathered}
\xymatrix{
V_f \ar[d]^{\widehat{\phi}} \ar[r]^{p} & f^*V \ar[d]^{f^*\phi} \\
T \mathbb{ P}^1 \ar[r]^{df} & f^*TX
}
\end{gathered}
\end{equation} 
where $V_f \, = \, T \mathbb{ P}^1\times_{f^*TX} f^*V$ is the fiber product, while
$$\widehat{\phi}\, :\, V_f \, \longrightarrow
\, T \mathbb{ P}^1$$ and $$p\,:\, V_f\, \longrightarrow \, f^*V$$
are the natural projections from the fiber product. Note that $V_f$ is the subbundle of 
$T\mathbb{ P}^1 \oplus f^*V$ whose fiber over any $y\,\in\, \mathbb{ P}^1 $ is the subspace 
of $T_y\mathbb{ P}^1 \oplus (f^*V)_y$ consisting of all $(a,\, b)$, where $a\,\in\, 
T_y\mathbb{ P}^1 $ and $b\, \in\, (f^*V)_y$, such that $$f^*\phi (b)\,=\, df (a).$$ The 
$\mathbb C$--Lie algebra structure on $V$ pulls back to a $\mathbb C$--Lie algebra on $f^*V$; 
the $\mathbb C$--Lie algebra structures on $T \mathbb{ P}^1 $ and $f^*V$ together produce a 
$\mathbb C$--Lie algebra structure on $V_f$. Thus $(V_f,\, \widehat{\phi})$ is a Lie 
algebroid over $\mathbb{ P}^1 $. Note that the maps $p$ and $\widehat{\phi}$ in \eqref{eq:cd1}
are Lie algebra 
structure preserving. Now, the pullback $f^*E\, \longrightarrow\, \mathbb{ P}^1 $ is equipped 
with a $(V_f,\, \widehat{\phi})$--connection $\widehat{\nabla}$ given by the following 
composition of maps
\begin{equation}\label{ewn}
f^*E \, \xrightarrow{\,\,\,\, f^*\nabla \,\,\,}\, f^*(E \otimes V^*) \, = \, (f^*E)\otimes (f^*V^*)\,
\xrightarrow{\,\,\, {\rm Id}_{f^*E}\otimes p^*\,\,\,}\, (f^*E)\otimes V_f^*,
\end{equation}
where $p^*$ is dual of the projection $p$ in \eqref{eq:cd1}. 

Note that the Lie algebroid $(V_f,\, \widehat{\phi})$ fits into the following short exact sequence 
\begin{equation}
\label{eq:v-f}
0\, \longrightarrow\, f^*S \, \longrightarrow \, V_f
\,\stackrel{\widehat{\phi}}{\longrightarrow}\, T \mathbb{P}^1 \, \longrightarrow\, 0,
\end{equation}
where $S \,=\, Ker(\phi)$; note that the surjectivity of $\widehat{\phi}$ follows from
the given condition that $\phi$ is surjective. Since $S$ is strictly nef over $X$,
and $df$ is not identically zero (recall that $f$ is a nonconstant map), it follows
that $f^*S$ is also strictly nef over $\mathbb{P}^1$ \cite[p.~84, Theorem 2.4]{Ha},
\cite[p.~360, Proposition 2.3]{Fu}.

The short exact sequence in \eqref{eq:v-f} splits holomorphically if and only if the extension
class in $\mathrm{H}^1(\mathbb{P}^1 ,\, K_{\mathbb{P}^1} \otimes f^*S)$ vanishes.
From Serre duality, we have 
$$
\mathrm{H}^1(\mathbb{P}^1 ,\, K_{\mathbb{P}^1} \otimes f^*S) \, = \, \mathrm{H}^0(\mathbb{P}^1,\, (f^*S)^* )^*,
$$
and $\mathrm{H}^0(\mathbb{P}^1,\, (f^*S)^* ) = 0$, because $f^*S$ is strictly nef over 
$\mathbb{P}^1$ \cite[Theorem 2.10]{LOY}, where $ (f^*S)^* $ dual of $ f^*S$. Therefore 
$\mathrm{H}^1(\mathbb{P}^1 ,\, K_{\mathbb{P}^1} \otimes f^*S ) = 0$, and hence the short 
exact sequence \eqref{eq:v-f} splits holomorphically. Fix a holomorphic homomorphism
$\gamma \, :\, T \mathbb{P}^1 \, \longrightarrow \, 
V_f $ of vector bundles such that $\widehat{\phi} \circ \gamma \, = \, 
\mathrm{Id}_{ T \mathbb{P}^1}$, where $\widehat{\phi}$ is the homomorphism
in \eqref{eq:v-f}. Consider the composition of maps
$$
f^*E \ \xrightarrow{\,\,\,\, \widehat{\nabla} \,\,\,}\ (f^*E)\otimes V_f^* \
\xrightarrow{\,\,\, {\rm Id}_{f^*E}\otimes \gamma^*\,\,\,}\ (f^*E)\otimes K_{\mathbb{P}^1}
$$
(recall that $\widehat{\nabla}$ is the composition of maps in \eqref{ewn})
which gives an usual holomorphic
connection on $f^*E\, \longrightarrow\, \mathbb{P}^1$. If a holomorphic vector
bundle $W$ on ${\mathbb P}^1$ admits a holomorphic connection, then $W$ is holomorphically
trivial. This is because any holomorphic connection on a Riemann surface is flat, and
${\mathbb P}^1$ is simply connected, so $W$ is holomorphically trivial.
Therefore, $f^*E$ is holomorphically trivial. Since the morphism $f$ is 
an arbitrary nonconstant map, it follows that $E$ is trivial \cite[Proposition 1.2]{BS1}.
\end{proof}

\begin{theorem}\label{thm:B}
Let $X$ be a rationally connected smooth complex projective variety, and let $(V,\, \phi)$ 
be a Lie algebroid over $X$ such that $\phi$ is surjective. Suppose that $S \,:=\, \text{Ker} 
(\phi)$ is strictly nef. Then, any $(V,\, \phi)$--connection on $E$ over $X$ is flat.
\end{theorem}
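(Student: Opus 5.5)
Since the hypotheses are those of Theorem \ref{thm:A}, the bundle $E$ carrying the $(V,\,\phi)$--connection $\nabla$ is holomorphically trivial. The plan is to fix a trivialization $E\,=\,\mathcal{O}_X^{\oplus r}$ and to compare $\nabla$ with the trivial $(V,\,\phi)$--connection $\nabla^0$. The latter acts componentwise by $d^0_V$, so $\nabla^0(f_1,\cdots,f_r)\,=\,(\phi^*(df_1),\cdots,\phi^*(df_r))$. Its curvature is given componentwise by $d^1_V\circ d^0_V\,=\,0$, so $\nabla^0$ is flat. Any two $(V,\,\phi)$--connections on $E$ differ by an $\mathcal{O}_X$--linear map, so
$$\nabla\,=\,\nabla^0+A, \qquad A\,\in\, H^0(X,\,\mathrm{End}(E)\otimes V^*)\,=\,\mathrm{M}_r\big(H^0(X,\,V^*)\big).$$
It therefore suffices to prove that $H^0(X,\,V^*)\,=\,0$. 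Then $A\,=\,0$, hence $\nabla\,=\,\nabla^0$, and so ${\mathcal K}(\nabla)\,=\,0$.

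To show $H^0(X,\,V^*)\,=\,0$, dualize the exact sequence \eqref{e.0} to obtain
$$0\,\longrightarrow\,\Omega^1_X\,\stackrel{\phi^*}{\longrightarrow}\,V^*\,\longrightarrow\,S^*\,\longrightarrow\,0.$$
From the associated long exact sequence it is enough to show that both $H^0(X,\,\Omega^1_X)$ and $H^0(X,\,S^*)$ vanish. The first vanishes because $X$ is rationally connected: every holomorphic $1$--form pulls back to a section of $\Omega^1_{\mathbb{P}^1}\otimes$ (a bundle that is positive on very free curves), and the vanishing is standard, see \cite{KMM}. Alternatively, one can run the same argument as for $S^*$ below, using that $f^*\Omega^1_X$ is negative along very free rational curves.

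For $H^0(X,\,S^*)$, take a section $\sigma$ and any nonconstant morphism $f\,:\,\mathbb{P}^1\,\longrightarrow\,X$. As in the proof of Theorem \ref{thm:A}, the bundle $f^*S$ is strictly nef, and hence $H^0(\mathbb{P}^1,\,(f^*S)^*)\,=\,0$ by \cite[Theorem 2.10]{LOY}. Thus $f^*\sigma\,=\,0$. Since $X$ is rationally connected, through every point $x\,\in\,X$ there passes a nonconstant rational curve. Hence $\sigma(x)\,=\,0$ for all $x$, and so $\sigma\,=\,0$.

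The only step requiring care is the vanishing of $H^0(X,\,V^*)$, specifically the $\Omega^1_X$ part. I would cite the standard fact that rationally connected varieties carry no nonzero holomorphic forms. If a self-contained argument is needed, one can pull back along a very free rational curve $f$, for which $f^*TX$ is ample, so that $f^*\Omega^1_X$ has only negative summands. Then a $1$--form vanishes along such curves; these curves pass through a general point, so the form vanishes on a dense set and hence identically. Once $H^0(X,\,V^*)\,=\,0$ is established, the identification $\nabla\,=\,\nabla^0$ and the flatness of $\nabla^0$ (from $d_V\circ d_V\,=\,0$) complete the proof.
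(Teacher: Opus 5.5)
Your argument is correct, but it takes a different route from the paper.

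\textbf{The paper's route.} The paper never identifies the connection. It takes very free curves $f:\mathbb{P}^1\to X$ and shows that $f^*V$ is ample: $f^*S$ is ample because it is strictly nef on $\mathbb{P}^1$, so $f^*V$ is an extension of the ample bundle $f^*TX$ by an ample bundle. It deduces $H^0(\mathbb{P}^1,\bigwedge\nolimits^2 f^*V^*)=0$. Since $f^*E$ is trivial, this kills the restriction of $\mathcal{K}(\nabla)$ to every such curve, and density of very free curves gives $\mathcal{K}(\nabla)=0$.

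\textbf{Your route.} You work one degree lower. From the dual sequence $0\to\Omega^1_X\to V^*\to S^*\to 0$, the vanishing of $H^0(X,\Omega^1_X)$ and $H^0(X,S^*)$ gives $H^0(X,V^*)=0$, hence $H^0(X,\mathrm{End}(E)\otimes V^*)=0$ for the trivial bundle $E$. The affine space of $(V,\phi)$--connections on $E$ is modelled on this group, so it reduces to the single point $\nabla^0$, which is flat because $d^1_V\circ d^0_V=0$.

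\textbf{What each buys.} Your approach needs neither the ampleness of the extension $f^*V$ nor the splitting of $\bigwedge\nolimits^2 f^*V^*$; strict nefness on arbitrary rational curves suffices for the $S^*$ part. It also yields a stronger statement: $E$ carries exactly one $(V,\phi)$--connection, namely the one induced componentwise by $d_V$ in any trivialization. The paper's argument targets the curvature directly. It would still apply in situations where connections are not unique, provided $\bigwedge\nolimits^2 V^*$ has no sections along a covering family of curves on which $E$ is trivial.

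\textbf{A cosmetic point.} Your first justification of $H^0(X,\Omega^1_X)=0$, pulling back to a section of $\Omega^1_{\mathbb{P}^1}\otimes(\cdots)$, is garbled. The version you give later is the right one: view $f^*\omega$ as a section of $f^*\Omega^1_X$, which has only negative summands along very free curves, and use that these curves cover a dense set.
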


\begin{proof}
Since $X$ is rationally connected, there are rational curves $f\, :\, {\mathbb P}^1\,
\longrightarrow\, X$ such that the pullback $f^*TX\, \longrightarrow\,{\mathbb P}^1$ is ample
\cite[p.~433--434, Theorem 2.1]{KMM} (see (5) of \cite[Theorem 2.1]{KMM} on p.~434). Any
ample vector bundle $W$ on ${\mathbb P}^1$ is globally generated, because $W$
is a direct sum of holomorphic line bundles \cite[p.~122, Th\'eor\`eme 1.1]{Gr}, and each
line bundle in the direct sum is of positive degree \cite[p.~84, Theorem 2.4]{Ha}.
Consequently, the
union of all rational curves $f\, :\, {\mathbb P}^1\,
\longrightarrow\, X$ such that the pullback $f^*TX\, \longrightarrow\,{\mathbb P}^1$
is ample is a Zariski dense subset of $X$ \cite{KMM} .

Take any rational curve $f\, :\, {\mathbb P}^1\,
\longrightarrow\, X$ such that $f^*TX\, \longrightarrow\,{\mathbb P}^1$ is ample.
We will show that the vector bundle $f^*V\, \longrightarrow\,{\mathbb P}^1$ is ample.

Pulling back \eqref{e.0} we have following exact sequence on ${\mathbb P}^1$:
\begin{equation}\label{es}
0\ \longrightarrow\ f^*S\ \longrightarrow\ f^*V \ \xrightarrow{\,\,\,f^*\phi\,\,\,}\
f^*TX \ \longrightarrow\ 0.
\end{equation}
Decompose the vector bundle $f^*S$ in \eqref{es} into a direct sum of line bundles
\cite[p.~122, Th\'eor\`eme 1.1]{Gr}. Each line
bundle in this direct sum is of positive degree because $S$ is strictly nef. Consequently,
$f^*S$ is ample. Since $f^*TX$ is also ample, we now conclude that $f^*V$ is ample
\cite[p.~84, Theorem 2.4]{Ha}.

Let $\nabla$ be a Lie algebroid connection on $E$. Let
\begin{equation}\label{es3}
{\mathcal K}(\nabla)\ \in\ H^0(X,\, \text{End}(E)\bigwedge\nolimits ^2 V^*)
\end{equation}
be the curvature of $\nabla$. The vector bundle $\bigwedge^2 f^*V^*$ is a direct sum of line
bundles of negative degrees because $f^*V$ is ample. Hence we have
\begin{equation}\label{es2}
H^0({\mathbb P}^1,\, \bigwedge\nolimits^2 f^*V^*)\ =\ 0. 
\end{equation}
{}From Theorem \ref{thm:A} we know that $f^*E$ is a trivial vector bundle.
Therefore, from \eqref{es2} it follows immediately that
\begin{equation}\label{es4}
{\mathcal K}(\nabla)\big\vert_{f({\mathbb P}^1)}\ = \ 0
\end{equation}
(see \eqref{es3}).
Since the union of all rational curves $f\, :\, {\mathbb P}^1\,
\longrightarrow\, X$ such that the pullback $f^*TX\, \longrightarrow\,{\mathbb P}^1$
is ample is a Zariski dense subset of $X$, from \eqref{es4} it follows that
${\mathcal K}(\nabla)\,=\, 0$. In other words, $\nabla$ is a flat Lie algebroid connection.
\end{proof}

Theorem \ref{thm:A} and Theorem \ref{thm:B} together yield the following:

\begin{corollary}\label{cor1}
Let $X$ be a rationally connected smooth complex projective variety. Let
$E$ be a holomorphic vector bundle on $X$ equipped with a usual holomorphic connection
$\nabla$. Then $E$ is holomorphically trivial and $\nabla$ is integrable (same as flat).
\end{corollary}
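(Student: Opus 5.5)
The plan is to specialize to the tangent Lie algebroid $(V,\,\phi)\,=\,(TX,\,{\rm Id}_{TX})$. By the remark after Definition \ref{Def-3}, a $(TX,\,{\rm Id}_{TX})$--connection on $E$ is exactly a usual holomorphic connection. The curvature ${\mathcal K}(\nabla)$ defined via the Chevalley-Eilenberg-de Rham complex is then the usual curvature in $H^0(X,\, \text{End}(E)\otimes \Omega^2_X)$, because in this case $(\bigwedge^\bullet V^*,\, d_V)$ is the holomorphic de Rham complex. So it suffices to check that the hypotheses of Theorem \ref{thm:A} and Theorem \ref{thm:B} hold for this Lie algebroid, and then to quote both results.

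The anchor ${\rm Id}_{TX}$ is clearly surjective, and its kernel is $S\,=\,0$. The main point to address is whether the zero bundle counts as strictly nef. I will argue that the hypothesis on $S$ enters the proofs only through two consequences:
\begin{itemize}
\item in Theorem \ref{thm:A}, through the vanishing of $H^1(\mathbb{P}^1,\, K_{\mathbb{P}^1}\otimes f^*S)$, which guarantees that \eqref{eq:v-f} splits;
\item in Theorem \ref{thm:B}, through the ampleness of $f^*V$ for curves with $f^*TX$ ample.
\end{itemize}
For $S\,=\,0$ both statements hold trivially. Indeed, the cohomology group is zero, and $f^*V\,=\,f^*TX$ is ample by the choice of $f$. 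So whether or not one regards $0$ as vacuously strictly nef (every quotient line bundle condition is empty), the arguments go through verbatim.

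Concretely, for Theorem \ref{thm:A} I will take a nonconstant $f\,:\,\mathbb{P}^1\,\longrightarrow\, X$. Then $V_f\,=\,T\mathbb{P}^1\times_{f^*TX} f^*TX\,\cong\, T\mathbb{P}^1$ and $\widehat\phi$ is an isomorphism, so $\gamma\,=\,\widehat\phi^{-1}$. The pulled-back connection is then an honest holomorphic connection on $f^*E$. Such a connection is flat on a curve, and $\mathbb{P}^1$ is simply connected, so $f^*E$ is trivial. Then \cite[Proposition 1.2]{BS1} gives that $E$ is trivial.

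For flatness I will follow Theorem \ref{thm:B}. I choose rational curves with $f^*TX$ ample, which exist and sweep out a Zariski dense set by \cite{KMM}. On such a curve $\bigwedge^2 f^*\Omega^1_X$ is a sum of negative line bundles, so it has no sections. Since $f^*E$ is trivial, ${\mathcal K}(\nabla)$ restricts to zero on each such curve, and by density ${\mathcal K}(\nabla)\,=\,0$.

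The only expected obstacle is the formal issue of the degenerate case $S\,=\,0$ in the hypotheses. As indicated, I would handle it by noting that only the two consequences above are used, rather than relying on a convention for the zero bundle.
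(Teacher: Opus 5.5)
Your proposal is correct and takes the same approach as the paper, which proves the corollary simply by applying Theorem \ref{thm:A} and Theorem \ref{thm:B} to the tangent Lie algebroid $(TX,\,{\rm Id}_{TX})$. Your explicit check that strict nefness of $S$ is used only for the splitting of \eqref{eq:v-f} and for the ampleness of $f^*V$, and that both hold trivially when $S\,=\,0$, is a useful addition, since the paper silently treats the zero bundle as (vacuously) strictly nef.
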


\begin{proof}
Theorem \ref{thm:A} implies that $E$ is holomorphically trivial, and Theorem \ref{thm:B}
implies that $E$ is flat.
\end{proof}

\section*{Acknowledgements}
The authors would like to thank the referee for carefully going through the manuscript and pointing out some errors in an earlier version. I.B. is partially supported by a J. C. Bose Fellowship (JBR/2023/000003). A.S. is partially supported by ANRF /ARGM/2025/000670/MTR.

\end{document}